\newtheorem{thm}{Theorem}[section]
\newtheorem{proposition}[thm]{Proposition}
\newtheorem{remark}[thm]{Remark}
\title{\bf ON A DEFORMED VERSION\\ OF THE $T$ SYSTEM\footnote{Preprint-Proceedings of the $XV^{th}$ International Conference on Mathematics and its Applications, Politehnica University of Timi\c{s}oara, 
November 1-3, 2018, pag. 90--97}}
\author{Cristian L\u azureanu and Cristiana C\u aplescu\\
Politehnica University of Timi\c{s}oara}
\date{}
\begin{document}

\thispagestyle{plain}

\maketitle

\begin{abstract}
We use the integrable deformations method for a three-dimensional
system of differential equations to obtain deformations of the $T$ system. 
We analyze a deformation given by particular deformation functions. We point out  that the obtained system preserves the chaotic behavior for some values of the deformation parameter.\footnote{MSC(2010): 37D45, 70H05\\
Keywords and phrases: {\it Hamilton-Poisson systems, integrable deformation, chaotic system.}}
\end{abstract}
\bigskip

\section{Introduction}
\smallskip
The systems of differentiable equations describe the dynamics of many phenomena that appear in engineering, economy, physics, biology, chemistry, and other domains. Some of such systems are perturbations of related, simpler systems, particularly integrable systems. A three-dimensional system of ODEs is integrable if there are two functionally independent smooth functions that are constants of motion of it. These systems are Hamilton-Poisson systems. Recently, integrable deformations of some Hamilton-Poisson systems were constructed. 

In \cite{BBM}, systems endowed with Lie-Poisson symmetries are considered. Considering Poisson-Lie groups as deformations of Lie-Poisson (co)algebras, a general method of construction of integrable deformations was proposed. In \cite{EKV17} it was shown that the Casimirs of a family of compatible Poisson structures for the undeformed systems provide a generating function for the integrals in involution of the deformed systems. As a consequence, a family of integrable deformations of the Bogoyavlenskij–Itoh systems was constructed. In \cite{Galajinsky14}, integrable deformations of the Euler top are obtained by alteration of its constants of motion. In the same manner in  \cite{Lazureanu17a,Lazureanu17b,LazPet18}, integrable deformations of some three-dimensional Hamilton-Poisson systems were constructed. The deformed systems are also Hamilton-Poisson systems. 

In \cite{Lazureanu18}, using the fact that a three-dimensional system of differential equations has a Hamilton-Poisson part, a method to construct deformations of such systems was introduced. In particular, this method can be applied to chaotic systems. Note that by this type of deformation the divergence of the initial system remains the same. Therefore, by a deformation of a chaotic system one can obtain a new chaotic system. In \cite{Lazureanu18}, deformations of Lorenz system \cite{Lorenz63}, R\"ossler system \cite{Rossler76}, and Chen system \cite{CheUet99} were given. 

The chaotic systems are widely investigated. The construction of new systems with chaotic behavior is justified by their applications. We recall some applications of such systems, namely in biological systems, secure communication, information processing (see, for example, \cite{BaNiSa85,CheDon98,Chen99,PecCar91,RabAba98,YanChu97}). 

In this paper we consider the $T$ system \cite{Tigan05} given by
\begin{equation}\label{T}
\left\{\begin{array}{l}
\dot x=-ax+ay\\
\dot y=(c-a)x-axz\\
\dot z=-bz+xy
\end{array}\right.~,~a,b,c\in\mathbb{R},a\not=0.
\end{equation}
We obtain deformations of this system. Moreover we study those dynamic properties of a particular deformation that point out its chaotic behavior.

 The $T$ system was analyzed from different points of view: dynamics \cite{Jiang10,TigOpr08,TigCon09,Van}, chaos control \cite{YChen08,Tigan06}, anti-synchronization \cite{VaiRaj11}. 
Therefore further studies of the deformed version of the $T$ system regarding dynamics, complexity, control, and synchronization can be performed (see also \cite{BinLaz15,Chen99,PecCar90,TigLli16}).

The paper is organized as follows. In Section 2 we construct some integrable deformations of the $T$ system. In Section 3 we consider a particular deformation. The new system has a new parameter $g$, which is in fact a tunning parameter. For the values of $a,b,c$ for which the $T$ system is chaotic, we point out the chaotic behavior of the deformed system for a range of values of $g$.

\section{Integrable deformations of the $T$ system}
\smallskip
In this section we construct deformations of the $T$ system that preserves the divergence of its flow. These deformations are obtained using the integrable deformations method for a three-dimensional system of differential equations \cite{Lazureanu18}.

Consider the following dynamical system on an open set $\Omega\subseteq\mathbb{R}^3$ 
\begin{equation}\label{DS}
\dot{{\bf x}}={\bf f}({\bf x})~,~{\bf x}=(x,y,z),{\bf f}=(f_1,f_2,f_3).
\end{equation}
Write $${\bf f}({\bf x})={\bf g}(\bf x)+{\bf h}(\bf x)$$
such that the system
\begin{equation}\label{IS}
\dot{{\bf x}}={\bf g}(\bf x)
\end{equation}
has two functionally independent constants of motion, $H=H({\bf x})$ and $C=C({\bf x})$. Then system \eqref{DS} has the Hamilton-Poisson part \eqref{IS}.

Notice that 
\begin{equation}\label{HPT}
\left\{\begin{array}{l}
\dot x=0\\
\dot y=-axz\\
\dot z=xy
\end{array}\right.~,~a,b,c\in\mathbb{R},a\not=0,
\end{equation}
is a Hamilton-Poisson part of system \eqref{T}. Indeed, we have the following result.
\begin{proposition}\label{p1}
The functions $H,C\in C^\infty(\mathbb{R}^3,\mathbb{R})$ given by
\begin{equation}\label{cm}
H(x,y,z)=x~,~~C(x,y,z)=\frac{1}{2}y^2+\frac{a}{2}z^2,
\end{equation}
are two functionally independent constants of motion of system \eqref{HPT}.
\end{proposition}
\begin{proof}
Using \eqref{HPT}, we immediately obtain $\dfrac{dH}{dt}=0$ and $\dfrac{dC}{dt}=0$, hence $H$ and $C$ are constants of motion. Furthermore, the Jacobian matrix of the functions $H,C$  has the rank two on the set $\{(x,y,z)\in\mathbb{R}^3:y^2+z^2\not=0\}$. Therefore the functions $H$ and $C$ are functionally independent, as required. 
\end{proof}
\begin{remark}
The system \eqref{T} has not an unique Hamilton-Poisson part. Indeed, we can consider ${\bf g}=(ay,-ax-axz,xy)$ and the constants of motion $$H=\frac{1}{2}x^2-az~,~~ C=\frac{1}{2}x^2+\frac{1}{2}y^2+\frac{a}{2}z^2,$$ and also ${\bf g}=(ay,-axz,xy)$ and the constants of motion $$H=\frac{1}{2}x^2-az~,~~C=\frac{1}{2}y^2+\frac{a}{2}z^2.$$
\end{remark}

In order to apply the integrable deformations method for a three-dimensional
system of differential equations \cite{Lazureanu18}, we have
\begin{proposition}\label{p2}
System \eqref{HPT} has the form $\dot{\bf x}=\mu\nabla H\times\nabla C$, where $\mu=x$ and the functions $H$ and $C$ are given by \eqref{cm}.
\end{proposition}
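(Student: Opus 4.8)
The plan is to verify the stated identity by a direct computation, matching the vector field $\mu\,\nabla H\times\nabla C$ against the right-hand side of \eqref{HPT} component by component. Since $H$ and $C$ are given explicitly in \eqref{cm}, their gradients are immediate: from $H(x,y,z)=x$ we get $\nabla H=(1,0,0)$, and from $C(x,y,z)=\frac{1}{2}y^2+\frac{a}{2}z^2$ we get $\nabla C=(0,y,az)$.

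Next I would evaluate the cross product. Expanding the determinant whose second and third rows are $\nabla H$ and $\nabla C$ along the first row, the factor $(1,0,0)$ annihilates two of the three minors, and a short computation gives $\nabla H\times\nabla C=(0,-az,y)$. Multiplying by $\mu=x$ then yields $x\,(0,-az,y)=(0,-axz,xy)$, which coincides exactly with $(\dot x,\dot y,\dot z)$ in \eqref{HPT}. This completes the verification.

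The computation presents no real obstacle; it is a one-line check once the gradients are recorded. The only place demanding minor care is the sign of the second component, which arises from the alternating sign in the cofactor expansion of the cross product. The correct value $-az$ (not $+az$) is precisely what produces the term $-axz$ in the $\dot y$ equation, so a sign slip there would be the one way to misstate the result.
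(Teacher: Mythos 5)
Your verification is correct and follows exactly the same route as the paper's proof: compute $\nabla H=(1,0,0)$ and $\nabla C=(0,y,az)$, form the cross product $\nabla H\times\nabla C=(0,-az,y)$, and multiply by $\mu=x$ to recover $(0,-axz,xy)$, the right-hand side of \eqref{HPT}. The paper states this more tersely, but there is no difference in substance.
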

\begin{proof}
System \eqref{HPT} has the form \eqref{IS} $\dot{\bf x}={\bf g}({\bf x})$ with ${\bf g}=(0,-axz,xy)$. On the other hand, $\nabla H\times\nabla C=(0,-az,y)$ and therefore ${\bf g}=x\nabla H\times\nabla C$, which finishes the proof. 
\end{proof}
The next result furnishes the integrable deformations of system $T$.

\begin{thm}\label{t1}
Let $\alpha,\beta$ be arbitrary differentiable functions on $\mathbb{R}^3$. Then an integrable deformation of system (\ref{T}) is given by the following system
\begin{equation}\label{IDT}
\left\{\begin{array}{l}
\dot x=-ax+ay+x(az\alpha_y-y\alpha_z+\alpha_y\beta_z-\alpha_z\beta_y)\\
\dot y=(c-a)x-axz-x(\beta_z+az\alpha_x+\alpha_x\beta_z-\alpha_z\beta_x)\\
\dot z=-bz+xy+x(\beta_y+y\alpha_x+\alpha_x\beta_y-\alpha_y\beta_x)
\end{array}\right.,
\end{equation}
$a,b,c\in\mathbb{R},$ where we denote $f_x:=\dfrac{\partial f}{\partial x}$.
\end{thm}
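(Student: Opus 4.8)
The plan is to apply the three-dimensional integrable deformations method of \cite{Lazureanu18} to the splitting of \eqref{T} recorded in Propositions \ref{p1} and \ref{p2}. First I would write the right-hand side of \eqref{T} as ${\bf f}={\bf g}+{\bf h}$, where ${\bf g}=(0,-axz,xy)$ is the Hamilton-Poisson part \eqref{HPT} and ${\bf h}={\bf f}-{\bf g}=(-ax+ay,(c-a)x,-bz)$ collects the remaining terms. By Proposition \ref{p2} one has ${\bf g}=x\,\nabla H\times\nabla C$ with $H,C$ as in \eqref{cm}. The method then prescribes deforming the two constants of motion, $H\mapsto H+\alpha$ and $C\mapsto C+\beta$, inside the Hamilton-Poisson part while keeping ${\bf h}$ unchanged, so that the candidate deformation is
\begin{equation*}
\dot{\bf x}=x\,\nabla(H+\alpha)\times\nabla(C+\beta)+{\bf h}.
\end{equation*}

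The second step is to make this explicit. Since $\nabla(H+\alpha)=(1+\alpha_x,\alpha_y,\alpha_z)$ and $\nabla(C+\beta)=(\beta_x,y+\beta_y,az+\beta_z)$, I would expand the cross product, multiply by $x$, and add ${\bf h}$ componentwise. The undeformed contributions reproduce $(0,-axz,xy)+{\bf h}$, i.e.\ the $T$ system, while the remaining terms assemble into the bracketed expressions of \eqref{IDT}. This is the only genuine computation, and it is routine; the single point requiring care is the correct placement of the quadratic cross terms $\alpha_\bullet\beta_\bullet$ and the signs produced by the middle component of the determinant.

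It remains to justify the word \emph{integrable}. Setting $\alpha=\beta=0$ annihilates every bracketed term and returns \eqref{T}, so \eqref{IDT} is genuinely a deformation of the $T$ system. For the integrability of its Hamilton-Poisson part $\dot{\bf x}=x\,\nabla(H+\alpha)\times\nabla(C+\beta)$ I would invoke the scalar triple product identity: for any smooth $F$,
\begin{equation*}
\nabla F\cdot\big(x\,\nabla F\times\nabla G\big)=x\,\nabla F\cdot(\nabla F\times\nabla G)=0,
\end{equation*}
and symmetrically in $G$. Taking $F=H+\alpha$ and $G=C+\beta$ shows that both $H+\alpha$ and $C+\beta$ are conserved along this flow, hence they are the two deformed integrals furnished by the method. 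The step I expect to require the most attention is the functional independence of $H+\alpha$ and $C+\beta$: for arbitrary $\alpha,\beta$ their Jacobian may degenerate, so the clean statement is that independence holds on the open set where it held before deformation (Proposition \ref{p1}) and persists there by continuity, in accordance with the construction of \cite{Lazureanu18}.
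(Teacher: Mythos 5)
Your proposal is correct and takes essentially the same route as the paper: the paper's proof simply quotes the ready-made deformation formula \eqref{IDC} of \cite{Lazureanu18} and substitutes $\mu=x$, $H=x$, $C=\frac{1}{2}y^2+\frac{a}{2}z^2$, and this formula is precisely what your expansion of $x\,\nabla(H+\alpha)\times\nabla(C+\beta)+{\bf h}$ with ${\bf h}=(-ax+ay,(c-a)x,-bz)$ reproduces, so the two arguments coincide (yours is merely more self-contained). One small caveat: your closing claim that functional independence of $H+\alpha$ and $C+\beta$ ``persists by continuity'' on the set where $H,C$ were independent is not true for arbitrary fixed $\alpha,\beta$ (take $\alpha=-x$, $\beta=0$, so that $H+\alpha\equiv 0$ and independence fails everywhere); however, this is not needed for the theorem as stated, since integrability here is meant in the sense of \cite{Lazureanu18}, namely that the deformed Hamilton--Poisson part admits $H+\alpha$ and $C+\beta$ as constants of motion, which your scalar triple product argument does establish.
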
 
\begin{proof}
Let \eqref{IS} be the Hamilton-Poisson part of the three-dimensional system \eqref{DS}. If \eqref{IS} has the form $\dot{\bf x}=\mu\nabla H\times\nabla C$, then an integrable deformations of system \eqref{DS} is given by \cite{Lazureanu18}:
\begin{equation}\label{IDC}
\left\{\begin{array}{lll}
\dot{x}&=&f_1(x,y,z)+\mu\left(H_y\beta_z-H_z\beta_y+\alpha_yC_z-\alpha_zC_y+\alpha_y\beta_z-\alpha_z\beta_y\right)\\
\dot{y}&=&f_2(x,y,z)-\mu\left(H_x\beta_z-H_z\beta_x+\alpha_xC_z-\alpha_zC_x+\alpha_x\beta_z-\alpha_z\beta_x\right)\\
\dot{z}&=&f_3(x,y,z)+\mu\left(H_x\beta_y-H_y\beta_x+\alpha_xC_y-\alpha_yC_x+\alpha_x\beta_y-\alpha_y\beta_x\right)\end{array},
\right.
\end{equation}
In our case $\mu=x$ and the functions $H$ and $C$ are given by \eqref{cm}, whence the conclusion follows.
\end{proof}
\begin{remark}
The functions $\alpha$ and $\beta$ are called deformation functions. It is obvious that if these functions vanish, then \eqref{IDT} becomes the $T$ system.
\end{remark}

\section{The chaotic behavior of a deformed version of the $T$ system}
In this section we point out some dynamic properties of a particular integrable deformation of system (\ref{T}) that emphasize its chaotic behavior. The considered deformation is given by Theorem \ref{t1}, by choosing particular deformation functions $\alpha$ and $\beta$. 

Consider the deformation functions given by $$\alpha (x,y,z)=gz~,~~ \beta(x,y,z)=0,$$ where $g\in\mathbb{R}$ is the deformation parameter. 
We obtain the following particular integrable deformation of the $T$ system:
\begin{equation}\label{particular}
\left\{\begin{array}{l}
\dot x=-ax+ay-gxy\\
\dot y=(c-a)x-axz\\
\dot z=-bz+xy
\end{array}\right.~,~a,b,c\in\mathbb{R},a\not=0,g\in\mathbb{R}.
\end{equation}
We can say that system (\ref{particular}) is the $T$ system with the parametric control $$u(x,y,z)=-gxy,$$ where $g$ is a tunning parameter.
 
In the following we consider $a,b,c\in (0,\infty)$.

It is known that for some values of the parameters $a,b,c$ the $T$ system has chaotic behavior \cite{Tigan05}. Choosing $a=2,b=0.2,c=30$, a strange attractor of this system is shown in Figure \ref{fig:1} (left). 

We mention that the considered deformed version of the $T$ system given by \eqref{particular} also displays a chaotic behavior. Indeed, we notice the presence of a strange attractor for $g>0$ (for example for $g=0.9$; see Figure \ref{fig:1}). Furthermore, for these values, system \eqref{particular} has three hyperbolic equilibrium points: an unstable saddle point $(0,0,0)$ with eigenvalues $\{-8.55,6.55,-0.2\}$, and two unstable saddle-focus points $(-2.42,-1.16, 14)$ and $(1.16,2.42, 14)$ with eigenvalues $\{-2.35,0.6 +3.79 i,0.6 -3.79 i\}$ and $\{-4.43,0.03+1.93 i,0.03-1.93 i\}$, respectively. Using \cite{Sandri96,Sandri10} for the initial conditions $(0.01,0.01,14.01)$, the largest value of positive Lyapunov exponent of the considered system is $\lambda_{L1}=0.17$. 
The other Lyapunov's exponents are given by $\lambda_{L2}=0.0$, $\lambda_{L3}=-3.93$. The Lyapunov dimension or Kaplan-Yorke dimension \cite{KapYor79} of the chaotic attractor of system (\ref{p1}) is 
$$\displaystyle{D_L=2+\frac{\lambda_{L1}+\lambda_{L2}}{|\lambda_{L3}|}=2.044}.$$ Using E$\& $F Chaos program \cite{DiHoPa12} in the Figure \ref{fig:2} the
variation of the largest Lyapunov exponent when $g$ varies is
shown. Apparently, there is a positive Lyapunov exponent for every  $g\in[-1,1]$.
\begin{figure}[t]
\begin{minipage}[t]{0.45\linewidth}
\centering
\includegraphics[width =1\textwidth]{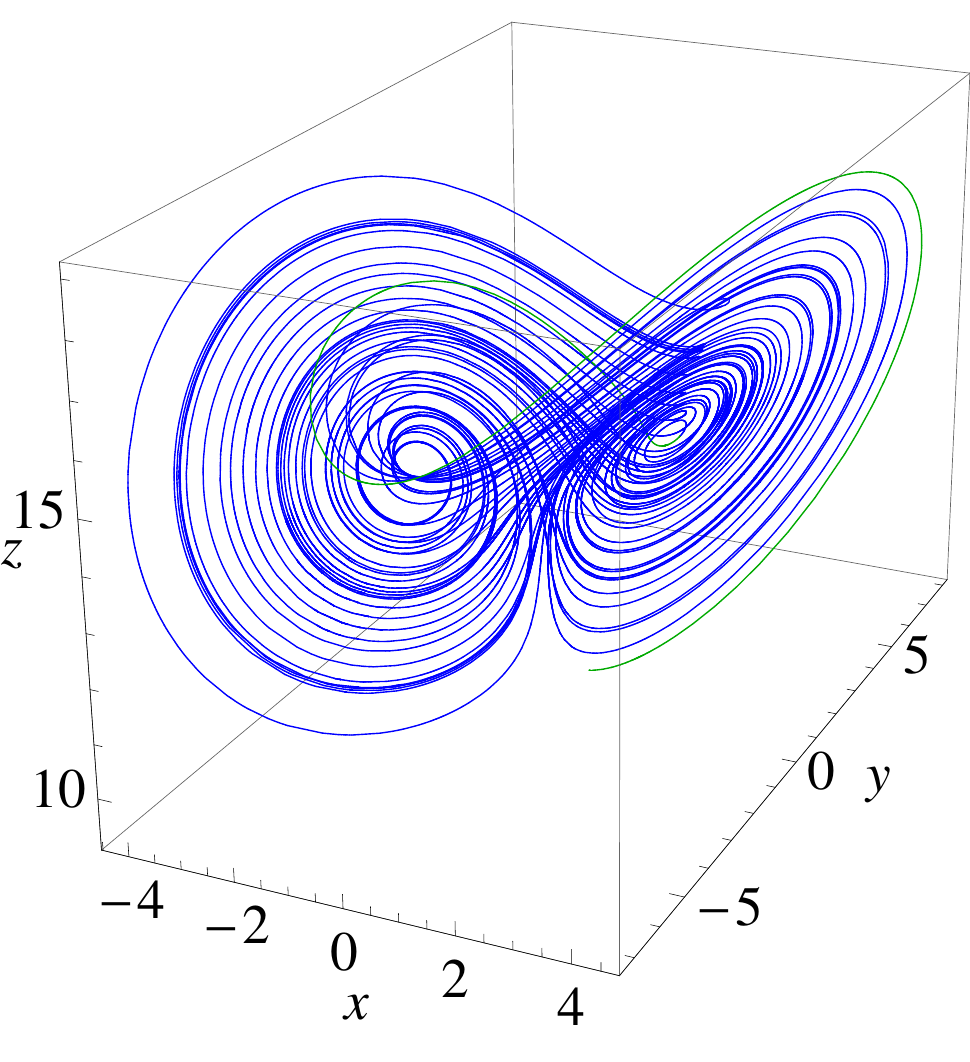}
\end{minipage}\hspace{0.2cm}
\begin{minipage}[t]{0.45\linewidth}\centering
\includegraphics[width =1.1\textwidth]{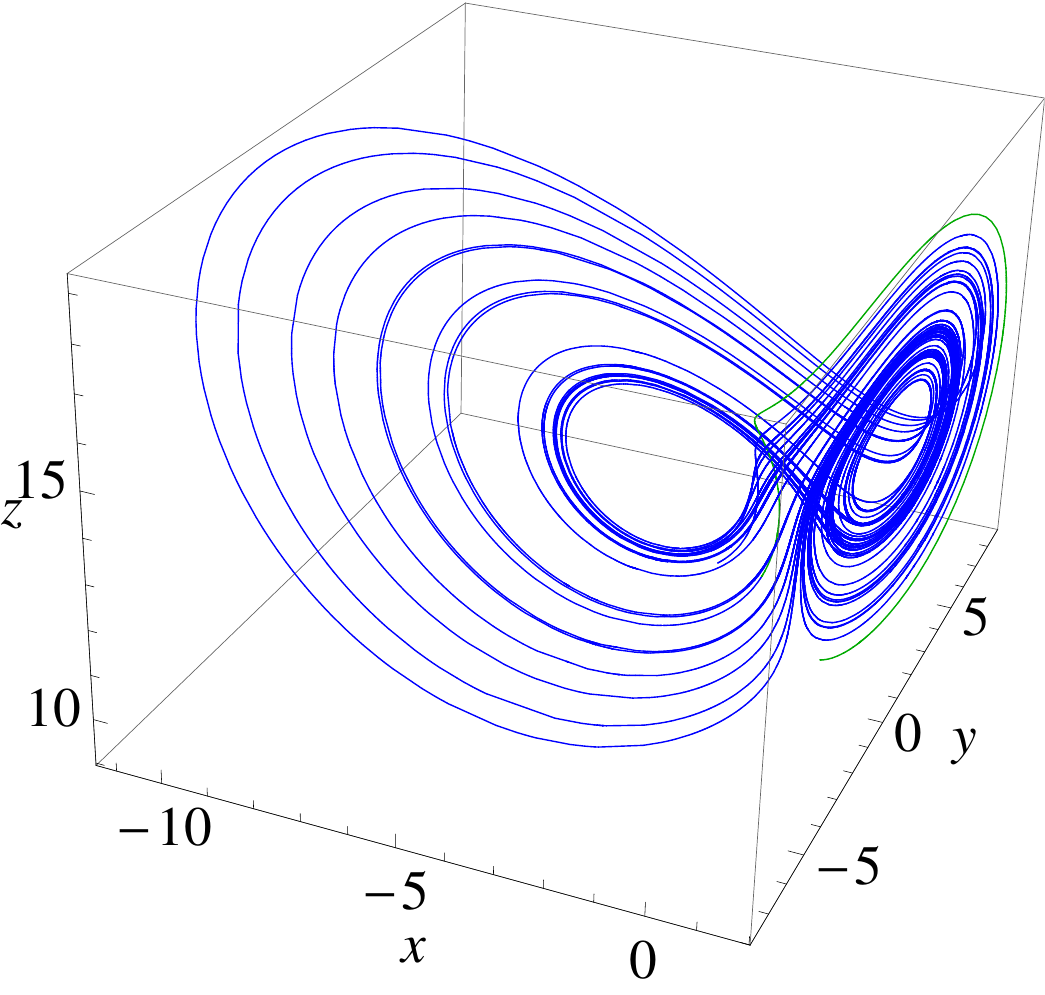}
\end{minipage}
\caption{\footnotesize The initial chaotic attractor (\ref{T}) (left, $g=0$) and one of its deformation (right, $g=0.9$), $a=2,b=0.2,c=30$, initial conditions $(0.01,0.01,14.01)$.}
\label{fig:1}
\end{figure}
\begin{figure}[h!]
\centering
\includegraphics[width =0.8\textwidth]{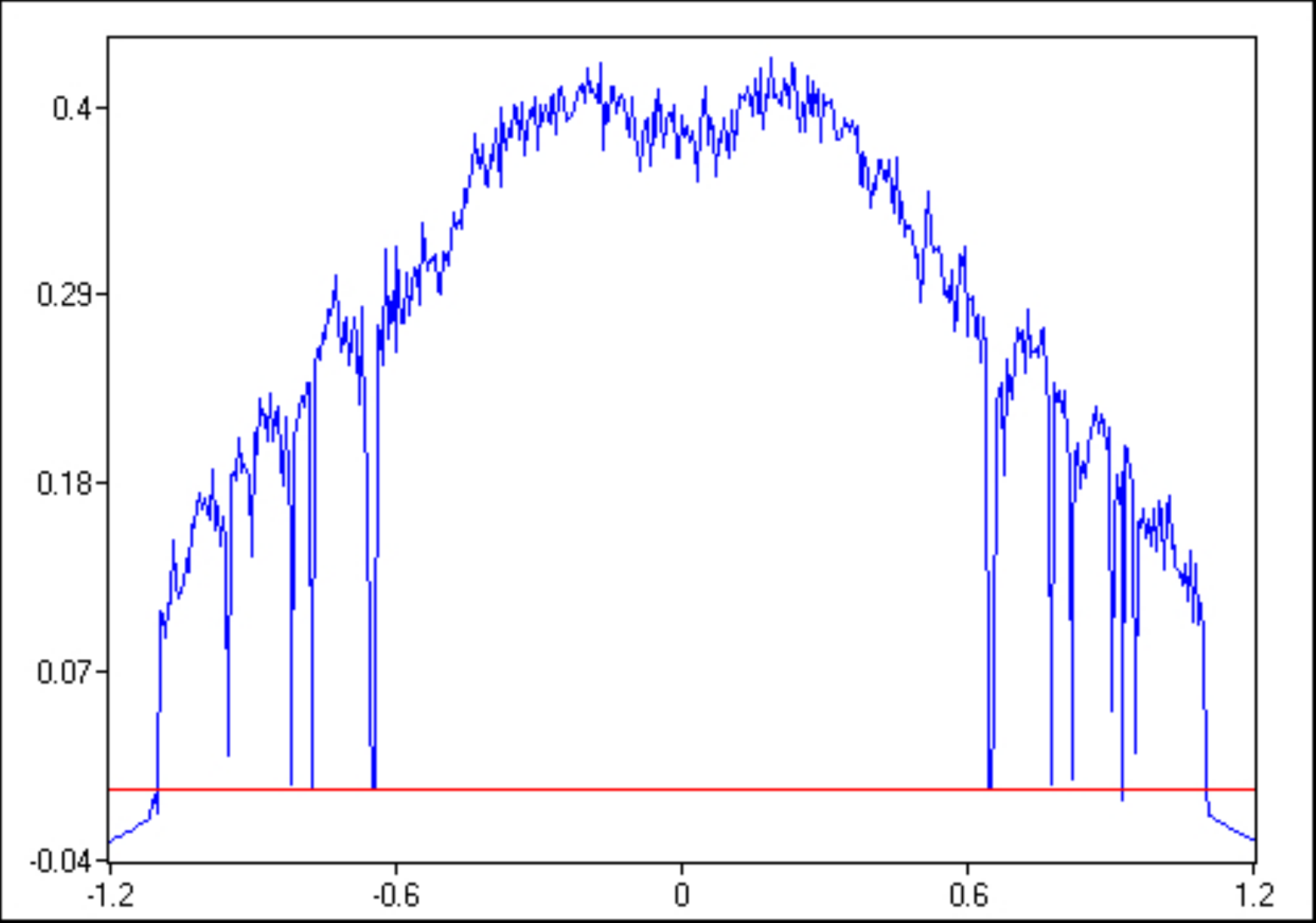}
\caption{\footnotesize The variation of the largest Lyapunov exponent of system \eqref{particular} as function of $g\in[-1.2,1.2]$ ( $a=2,b=0.2,c=30$, initial conditions $(0.01,0.01,14.01)$).}
\label{fig:2}
\end{figure}

\begin{remark}
By geometric point of view the orbits plotted in Figure \ref{fig:1} show that system \eqref{particular} is a deformation of system $T$.
\end{remark}

\section{ACKNOWLEDGMENTS}
This work was supported by research grants PCD-TC-2017.

\smallskip
{\footnotesize
\hspace*{0.5cm}
\begin{minipage}[t]{8cm}$$\begin{array}{l}
\mbox{Cristian L\u azureanu -- Department of Mathematics,}\\
\mbox{Politehnica University of Timi\c soara},\\
 \mbox{P-ta Victoriei 2, 300 006, Timi\c soara, ROMANIA}\\
\mbox{E-mail: cristian.lazureanu@upt.ro}\end{array}$$
\end{minipage}\\
{\footnotesize
\hspace*{0.5cm}
\begin{minipage}[t]{8cm}$$\begin{array}{l}
\mbox{Cristiana C\u aplescu -- Department of Mathematics,}\\
\mbox{Politehnica University of Timi\c soara},\\
 \mbox{P-ta Victoriei 2, 300 006, Timi\c soara, ROMANIA}\\
\mbox{E-mail: cristiana.caplescu@upt.ro}\end{array}$$
\end{minipage}\\

\end{document}